\theoremstyle{plain}
\newtheorem{thm}{Theorem}[section]
\newtheorem{theorem}[thm]{Theorem}
\newtheorem{lemma}[thm]{Lemma}
\newtheorem{corollary}[thm]{Corollary}
\newtheorem{proposition}[thm]{Proposition}
\theoremstyle{definition}
\newtheorem{question}[thm]{Question}
\newtheorem{remark}[thm]{Remark}
\newtheorem{definition}[thm]{Definition}
\numberwithin{equation}{thm}
\newcommand{\C}{{\mathbb C}}
\title[]{On Shimura subvarieties of the Prym locus}
\author{Abolfazl Mohajer}
\address{Universit\"{a}t Mainz, Fachbereich 08, Institut f\"ur Mathematik, 55099 Mainz, Germany}
\email{mohajer@uni-mainz.de}
\subjclass{14G35, 14H15, 14H40}
\keywords{Shimura variety, Prym variety}
\begin{document}

\begin{abstract}
We show that families of Pryms of  abelian Galois covers of
$\mathbb{P}^{1}$ in $A_{g}$ do not give rise to  high dimensional Shimura
subvareties. Our method is based on analyzing the Monodromy of the family.
\end{abstract}

\maketitle

\section{Introduction}

In \cite{CFGP}, Shimura curves of PEL type in $A_g$, generically
contained in the Prym locus have been studied. Let $R_g$ be the
scheme of isomorphism classes $[C,\eta]$, for $C$ a smooth
projective curve of genus $g$ and $\eta\in Pic^0(C)$ is a
2-torsion element, i.e., $\eta\neq \mathcal{O}_C$ but
$\eta^2=\mathcal{O}_C$. A point $[C,\eta]$ corresponds to an
\'etale double cover $h:\tilde{C}\to C$. There is an induced norm
map $Nm:Pic^0(\tilde{C})\to Pic^0(C)$ and the Prym variety
associated to $[C,\eta]$ is defined to be connected component of
the identity of $\ker Nm$ and is denoted by $P(C,\eta)$ or
$P(\tilde{C},C)$. Similarly, one can construct the ramified Prym
variety. To do this, one considers the scheme $R_{g,2}$ of pairs
$[C,B,\eta]$ up to isomorphism such that $\eta$ of degree $1$ on
$C$ and $B$ a divisor in the linear series $|\eta^2|$
corresponding to a double covering $\pi:\tilde{C}\to C$ ramified
above $B$. The assignment $[C,B,\eta]\mapsto P(\tilde{C}, C)$
defines a map $\mathscr{P}:R_{g,2}\to A_g$. Both in the unramified
Prym locus corresponding to the double \'etale covers and the
ramified Prym locus, corresponding to the family of double covers
ramified at two points, \cite{CFGP} gives examples of Shimura
curves in the Prym locus provided that the quotient of the base
curve by the group is $\mathbb{P}^{1}$. In fact, they consider a
family of Galois covers $\tilde{C_t}\to \mathbb{P}^{1}$ with
Galois group $\tilde{G}$ and a central involution $\sigma$ and
also double covering $\tilde{C_t}\to \tilde{C_t}/\langle
\sigma\rangle$ which is either \'etale or ramified at exactly two
distinct points. By the theory of coverings, such a Galois
covering is given by an epimorphism $\tilde{\theta}:\Gamma_r\to
\tilde{G}$ with branch points $t_1,\cdots,t_r\in \mathbb{P}^{1}$.
Here $\Gamma_r$ is the braid group on $r$ elements which is
isomorphic to the fundamental group of $\mathbb{P}^{1}\setminus
\{t_1,\cdots,t_r\}$. Varying the branch points, we get a family
$R(\tilde{G},\tilde{\theta},\sigma)\subset R_g$. Note that in the
above mentioned paper, the families are all one-dimensional, so
that the examples all give rise to Shimura curves. Also, the
authors ask the following question

\begin{question}(\cite{CFGP})
Do there exist high dimensional Shimura (special) subvarieties contained
generically in the Prym locus?
\end{question}

In this paper, we investigate this
question when $\tilde{G}$ is abelian and show that in this case the answer to the above question is negative.
Note that in \cite{CF} the authors give upper bounds for the dimension of a germ of a totally 
geodesic submanifold, and hence of a special subvariety in the Prym locus. In \cite{MZ}, we have applied
characteristic $p$ methods also to exclude Shimura curves from the
Torelli locus. However, due to the fact that the Prym map is not
an embedding, this method does not work here. In section $2$, we
explain an alternative construction of the abelian covers of
$\mathbb{P}^{1}$ as given in \cite{CFGP} and also the construction
of the Prym variety and the Prym map for which we mostly follow
\cite{CFGP}. More precisely, we fix integers $N\geq 2$, $s\geq 4$,
$m\geq 1$ and an $m \times s$ matrix $A$ with entries in
$(\mathbb{Z}/N\mathbb{Z})$. Given an $s$-tuple
$t=(z_{1},...,z_{s})\in (\mathbb{A}^{1}_{\mathbb{C}})^{s}$ there
is a Galois cover $Y_{t}\rightarrow \mathbb{P}^{1}$ branched at
the points $z_{j}$ with local monodromies encoded in the matrix
$A$ and an abelian Galois group $\tilde{G}$ which is isomorphic to
the column span of the matrix $A$ and hence is a subgroup of the
group $(\mathbb{Z}/N\mathbb{Z})^{m}$. Using this construction, the $\sigma$-action and its
eigenspaces and also the eigenspaces of the whole group acting on this spaces that are useful
for our computations are more conceretely and more easily computable. By varying the branch points 
we obtain a family $f:Y\to T$ of abelian covers of $\mathbb{P}^{1}$. The image of $T$ in $R_g$ (resp. $R_{g,2}$),
which we also denote by $T$ is of dimension $s-3$ and we are
interested about the image $Z=\mathscr{P}(T)$. There exists a natural variation of Hodge structures
over $T$ that enables us to construct a special subvariety $S_f$ that contains $Z$. It
turns out, that this is the smallest special subvariety of $A_g$ with this property. So $Z=S_f$, or equivalently $\dim Z=\dim S_f$,
if and only if $Z$ is a special subvariety. Unlike the Torelli
map, the Prym map is not injective, however, it holds that $\dim
Z\leq s-3$.  Our strategy is therefore to show that if $s$ is
large, the special subvariety $S_f$ is of dimension strictly
greater than $s-3$ and thus showing that for large $s$, $\dim
Z<\dim S_f$ and hence $Z$ is not special, see Lemma~\ref{dimS_f}. In section 3, we prove
the above inequality. Our method, previously used in \cite{MZ} for
families of Jacobians, is analyzing the group structure of the
generic Mumford-Tate group, that results in inequalities for $\dim
S_f$ which we apply to prove the above statements. We discuss
monodromy of the higher dimension families (i.e., with large $s$)
and thereby show that for such families, the monodromy group is
large and prove as a consequence that the family does not give rise to a
Shimura subvariety. Since we use only the group structure of the
family of abelian covers, our results are insensitive to the
ramification behavior of the double covering, so we state and
prove our results usually without distinguishing between
unramified and ramified Prym locus.

\section{Construction of abelian covers of $\mathbb{P}^{1}$ and the Prym map}

In this section, we follow closely \cite{CFGP} and also \cite{MZ} whose notations come mostly from \cite{W}. More details
about abelian coverings and Prym varieties can be consulted from
these two references respectively. For the latter, \cite{BL} is also a
comprehensive reference.

An abelian Galois cover of $\mathbb{P}^{1}$ is determined by a
collection of equations in the following way: Consider an $m\times
s$ matrix $A=(r_{ij})$ whose entries $r_{ij}$ are in
$\mathbb{Z}/N\mathbb{Z}$ for some $N\geq 2$. Let
$\overline{\mathbb{C}(z)}$ be the algebraic closure of
$\mathbb{C}(z)$. For each $i=1,...,m,$ choose a function $w_{i}\in
\overline{\mathbb{C}(z)}$ with
\[w_{i}^{N}=\prod_{j=1}^{s}(z-z_{j})^{\widetilde{r}_{ij}}\text{ for }i=
1,\cdots, m,\] where $\widetilde{r}_{ij}$ is the lift of $r_{ij}$
to $\mathbb{Z} \cap [0,N)$. We impose the condition that the sum
of the columns of $A$ are zero (in $\mathbb{Z}/N\mathbb{Z}$). This
implies that the cover is \emph{not} ramified over infinity. We
call the matrix $A$, the matrix of the covering. We also remark
that all operations with rows and columns will be carried out in
the ring $\mathbb{Z}/N\mathbb{Z}$, i.e., they will be considered
modulo $N$. The local monodromy around the branch point $z_{j}$ is
given by the column vector $(r_{1j},....,r_{mj})^{t}$ and so the
order of ramification over $z_{j}$ is $
\frac{N}{gcd(N,\widetilde{r}_{1j},..,\widetilde{r}_{mj})}$. Using
this and the Riemann-Hurwitz formula, the genus $g$ of the cover
can be computed by:
\[g= 1+ d(\frac{s-2}{2}- \frac{1}{2N}\sum_{j=1}^{s}
gcd(N,\widetilde{r}_{1j},...,\widetilde{r}_{mj})),\] 
where $d$ is the degree of the covering which is equal, as pointed out above,
to the column span (equivalently row span)  of the matrix $A$. In
this way, the Galois group $G$ of the covering will be a subgroup
of $(\mathbb{Z}/N\mathbb{Z})^{m}$. Note also that this group is
isomorphic to the column span of the above matrix.

As explained in the introduction, giving a Galois cover of
$\mathbb{P}^{1}$ is equivalent to giving an epimorphism
$\theta:\Gamma_s\to G$, see \cite{Vo}.

Using the above construction, we can also construct families of
abelian covers of the line by letting the branch points vary on an
affine scheme. The details are as follows:\\

Let $T \subset (\mathbb{A}^{1})^{s}$ be the complement of the big
diagonals, i.e., $T=\mathcal{P}_{s}= \{(z_{1},....,z_{s})\in
(\mathbb{A}^{1})^{s}\mid z_{i}\neq z_{j} \forall i\neq j \}$. Over
this open affine set, we define a family of abelian covers of
$\mathbb{P}^{1}$ to have the equation:
\[w_{i}^{N}=\prod_{j=1}^{s}(z-z_{j})^{\widetilde{r}_{ij}}\text{ for }i=
1,\cdots, m,\]
where $(z_{1},...,z_{s})\in T$ and
$\widetilde{r}_{ij}$ is the lift of $r_{ij}$ to $\mathbb{Z}\cap
[0,N)$ as before. Varying the branch points we get a family
$f:C\to T$ of smooth projective curves over $T$ whose fibers $C_t$
are abelian covers of $\mathbb{P}^{1}$ introduced above.\\

Now let us explain the construction of the Prym locus of the above
family of abelian covers of the projective line. For the general
case, we refer to \cite{CFGP}.\\

Let $\tilde{C}\to \mathbb{P}^{1}$ be an abelian $\tilde{G}$-Galois
cover as constructed above. Let $\sigma\in \tilde{G}$ be an
involution and define $G=\tilde{G}/\langle \sigma\rangle$. Let
$\pi: \tilde{C}\to C$ be an element of the family and let $\eta\in
Pic^0(C)$ be the 2-torsion element yielding the \'etale double
covering $\pi$. Set $V=H^0(\tilde{C},K_{\tilde{C}})$ and let
$V=V_+\oplus V_-$ be the eigenspace decomposition for the action
of $\sigma$. There is also a corresponding decomposition for
$H^1(\tilde{C_t},\mathbb{C})_-=V_{-,t}\oplus \overline{V}_{-,t}$.
The $\sigma$-action also yields
$\Lambda=H_1(\tilde{C_t},\mathbb{Z})_-$. The associated Prym
variety is by definition the following abelian variety of
dimension $g-1$
\[P(C_t,\eta_t)=V^{*}_{-,t}/\Lambda,\]
see \cite{BL} for more details. Therefore, we get the Prym
map
\[\mathscr{P}:R_g\to A_{g-1},\]
where $R_g$ is as in the introduction.\\

Analogously we can consider the moduli space parametrising ramified 
double coverings and the corresponding Prym varieties. Let $R_{g,2}$, as
in the introduction, be the scheme $R_{g,2}$ of pairs
$[C,B,\eta]$ up to isomorphism such that $\eta$ of degree $1$ on
$C$ and $B$ a divisor in the linear series $|\eta^2|$
corresponding to a double covering $\pi:\tilde{C}\to C$ ramified
above $B$. The assignment $[C,B,\eta]\mapsto P(\tilde{C}, C)$
defines a map $\mathscr{P}:R_{g,2}\to A_g$. We remark that since we use only the group structure of the
Mumford-Tate group of the family, our results are insensitive to the
ramification behavior of the double covers, so we state and
prove our results usually for special subvarieties of $A_{g-1}$ whereas they are more generally true 
for ramified Prym locus.

\begin{definition}
Fix a matrix $A$ as in the beginning of this section. A Prym datum is a triple $(\tilde{G}, \tilde{\theta},\sigma)$
where $\tilde{G}$ is the abelian group generated by the columns of the matrix $A$ and $\sigma\in \tilde{G}$ is an element of order
2 that is not contained in $\displaystyle \cup_i <T_i>$ where $T_i$ is the $i$-th column of the matrix  $A$.
\end{definition} 

Since we will be dealing with special subavarities of $A_g$, the moduli space of
principally polarized abelian varieties of dimension $g$, we sketch the construction of $A_g$ as a
Shimura variety and refer to \cite{M} and \cite{CFGP} for more details. Let $V_{\mathbb{Z}}:=\mathbb{Z}^{2g}\subset
V_{\mathbb{Q}}:=\mathbb{Q}^{2g}$ and let $\psi:
V_{\mathbb{Z}}\times V_{\mathbb{Z}}\to \mathbb{Z}$ be the standard
symplectic form. Let $L=Gsp(V_{\mathbb{Z}},\psi)$ be the group of
symplectic similitudes and
$\mathbb{S}:=Res_{\mathbb{C}/\mathbb{R}}\mathbb{G}_m$ be the
Deligne torus. Consider the space $\mathcal{H}_g$ of homomorphisms
$h:\mathbb{S}\to L_{\mathbb{R}}$ that define a Hodge structure of
type $(1,0)+(0,1)$ on $V_{\mathbb{Z}}$ with $\pm (2\pi i)\psi$ as
a polarization. The pair $(L_{\mathbb{Q}},\mathcal{H}_g)$ is a
Shimura datum and $A_g$ can be described as the Shimura variety
associated to this Shimura datum as follows. Let $K_n:=\{g\in
G(\widehat{\mathbb{Z}})|g\equiv 1$ $(mod$ $n)\}$ with $n\geq 3$.
Then $A_{g,n}(\mathbb{C})=L(\mathbb{Q})\setminus
\mathcal{H}_g\times L(\mathbb{A}_f)/K_n$. The natural number $n$
is called the level structure.
Since it does not play any role in the arguments, we will omit it from the notation. 
As $A_g$ has the structure of a Shimura variety, one can define its special (or Shimura) subvarieties. Consider an algebraic
subgroup $N\subset L_{\mathbb{Q}}$ for which the set
\[Y_N=\{h\in \mathcal{H}_g| h \text{ factors through } N_{\mathbb{R}}\}\]
is non-empty. If $Y^{+}$ is a connected component of $Y_N$ and
$\gamma K_n\in L(\mathbb{A}_f)/K_n$, the image of $Y^{+}\times
\{\gamma K_n\}$ in $A_g$ is an algebraic subvariety. We define a
\emph{special (Shimura) subvariety} as an algebraic subvariety $S$ of $A_g$
which arises in this way, i.e., there exists a connected component
$Y^+\subset Y_N$ and an element $\gamma K_n\in
L(\mathbb{A}_f)/K_n$
such that $S$ is the image of $Y^{+}\times \{\gamma K_n\}$ in $A_g$.\\

By sending a point $t\in T$ to the class of the pair
$(C_t,\eta_t)$ one gets a map $T\to R_g$ and one can show that the
dimension of the image $T(\tilde{G},\tilde{\theta},\sigma)$ of
$T$ (which we also denote by $T$) in $R_g$ is equal to $s-3$, see \cite{CFGP}, p. 7. Therefore the
above family gives rise to a subvariety of $R_g$ of dimension
$s-3$. In this paper, we are interested in determining whether the
subvariety $Z=\overline{\mathscr{P}(T)}\subset A_{g-1}$ is a special or Shimura subvariety. The analogous statements hold also 
for ramified Prym maps and analogously we are interested in the image $Z=\overline{\mathscr{P}(T)}\subset A_{g}$ for $T\hookrightarrow R_{g,2}$ again of dimension $s-3$.\\

\begin{remark} \label{smallestshimura}
The above constructions, make it clear that there is a
$\mathbb{Q}$-variation of Hodge structures over $T$ with fibers
given by $H^1(\tilde{C_t},\mathbb{Q})_-$. We choose a
Hodge-generic point $t_0\in T(\mathbb{C})$ and let $M\subset
GL(H^1(\tilde{C_{t_0}},\mathbb{Q})_-)$ be the generic Mumford-Tate
group of the family. Let $S_{f}$ be the natural Shimura variety
associated to the reductive group $M$. So in general, this subvariety is different from the one with the same notation
in \cite{MZ}, Remark 2.7. The special subvariety $S_{f}$ is in fact the
\emph{smallest} special subvariety that contains $Z$ and its
dimension depends on the real adjoint group $M^{ad}_{\mathbb{R}}$.
Indeed, if $M^{ad}_{\mathbb{R}}= Q_{1}\times...\times Q_{r}$ is
the decomposition of $M^{ad}_{\mathbb{R}}$ to $\mathbb{R}$-simple
groups then $\dim S_{f}= \sum \delta(Q_{i})$. If
$Q_{i}(\mathbb{R})$ is not compact then $\delta(Q_{i})$ is the
dimension of the corresponding symmetric space associated to the
real group $Q_{i}$ which can be read from Table V in \cite{H}. If
$Q_{i}(\mathbb{R})$ is compact (in this case $Q_{i}$ is called anisotropic) we
set $\delta(Q_{i})=0$. We remark that for $Q=PSU(p,q)$,
$\delta(Q)=pq$ and for $Q=Psp_{2p}$, $\delta(Q)=\frac
{p(p+1)}{2}$. Our computations below show that in fact such
factors do occur in the decomposition of $M$, see Lemma~\ref{dimeigspace}. Note that $Z$ is a
Shimura subvariety if and only if $\dim Z=\dim S_{f}$, i.e.,
if and only if $Z=S_{f}$.
\end{remark}

These observations lead to the following key lemma.

\begin{lemma} \label{dimS_f}
If $\dim S_f>s-3$, then the family does not give rise to a special
subvariety of the Prym locus.
\end{lemma}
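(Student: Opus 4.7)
The plan is to derive a short contradiction by combining three facts already recorded in the excerpt: (i) the dimension estimate $\dim Z \leq s-3$ coming from $Z=\overline{\mathscr{P}(T)}$ together with $\dim T = s-3$; (ii) the inclusion $Z \subseteq S_f$; and (iii) the minimality of $S_f$ among special subvarieties of $A_{g-1}$ (resp.\ $A_g$) containing $Z$, as recorded in Remark~\ref{smallestshimura}.

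First I would argue by contradiction: assume that $Z$ is a special subvariety of the Prym locus. Since $S_f$ is, by construction, the smallest special subvariety of $A_{g-1}$ (or $A_g$ in the ramified case) that contains $Z$, the assumption forces $Z = S_f$. Indeed, $Z$ is itself a special subvariety containing $Z$, so by minimality $S_f \subseteq Z$; combined with $Z \subseteq S_f$ this yields equality.

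Next I would compare dimensions. Equality $Z = S_f$ gives $\dim Z = \dim S_f$. On the other hand, the Prym map $\mathscr{P}$ is defined on $T$ whose dimension in $R_g$ (resp.\ $R_{g,2}$) is $s-3$, hence $\dim Z = \dim \overline{\mathscr{P}(T)} \leq \dim T = s-3$. Combining, $\dim S_f \leq s-3$, which contradicts the standing hypothesis $\dim S_f > s-3$. Therefore $Z$ cannot be special.

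Since every step is tautological once Remark~\ref{smallestshimura} and the dimension bound $\dim Z \leq s-3$ are in hand, there is no real obstacle here: the lemma is a reformulation of the minimality property of $S_f$, and its only purpose is to isolate the strictly positive content of the paper, namely the strict inequality $\dim S_f > s-3$, which is the subject of the monodromy computations in Section~3.
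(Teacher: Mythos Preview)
Your proof is correct and follows essentially the same route as the paper's: both use the bound $\dim Z \leq s-3$ together with the characterization from Remark~\ref{smallestshimura} that $Z$ is special if and only if $Z=S_f$, and then conclude by a direct dimension comparison. The only cosmetic difference is that you phrase it as a proof by contradiction while the paper argues directly that $\dim S_f>s-3\geq \dim Z$ forces $Z\neq S_f$.
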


\begin{proof}
By the constructions and explanations in previous paragraphs, we
have a map $\mathscr{P}:R_g\to A_{g-1}$ (resp. $R_{g,2}\to A_g$)
and $Z=\overline{\mathscr{P}(T)}\subset A_{g-1}$ (resp. $A_{g}$).
Now unlike the Torelli map, the Prym map is not injective,
however, it holds that $\dim Z\leq s-3$. Hence if $\dim S_f>s-3$,
one concludes that $Z\neq S_f$ and therefore $Z$ is not a special
subvariety by the above.
\end{proof}

In the light of the above lemma, our strategy is to show that for families with large $s$, the subvariety $S_f$ constructed above
is of dimension strictly greater than $s-3$, hence the subvariety $Z$ is not special by the above lemma. 

\subsection{Calculations related to abelian covers}

\begin{remark} \label{abeliangroupcharacter}
Let $G$ be a finite abelian group,
then the character group of $G$, $\mu_G= Hom(G,\mathbb{C}^{*})$ is isomorphic to $G$. To see
this, first assume that $G=\mathbb{Z}/N$ is a cyclic group. Fix
an isomorphism between $\mathbb{Z}/N$ and the group of $N$-th
roots of unity in $\mathbb{C}^{*}$ via $1\mapsto exp(2\pi i/N)$.
Now the group $\mu_G$ is isomorphic to this latter group via $\chi
\mapsto \chi(1)$. In the general case, $G$ is a product of finite cyclic groups, so this isomorphism extends to an
isomorphism $\varphi_G: G \xrightarrow{\sim} \mu_G$. In the sequel, we use this isomorphism frequently to identify
elements of $G$ with its characters.
\end{remark}

Let $n\in
G\subseteq (\mathbb{Z}/N)^{m}$ or equivalently the corresponding character by the above remark. 
We regard $n$ as an $1\times m$ matrix. Then we can form the matrix product $n\ldotp A$ and
$n\ldotp A=(\alpha_{1},...,\alpha_{s})$. Here, as usual, the operations are
taking place in $\mathbb{Z}/N$ but the $\alpha_{j}$ are regarded as
integers in $[0,N)$.\\

Let $n\in G$ be the element $(n_1,\cdots, n_m)\in
G\subset (\mathbb{Z}/N\mathbb{Z})^{m}$. A basis for the
$\mathbb{C}$-vector space $H^0(C,K_C)$ is given by the forms
$\omega_{n,\nu}=z^{\nu} w_{1}^{n_1}\cdots w_{m}^{n_m}\displaystyle
\prod_{j=1}^{s} (z-z_j)^{\lfloor
-\frac{\tilde{\alpha_j}}{N}\rfloor}dz$. Here $n\in G, \tilde{\alpha_j}$ is the lift of $\alpha_j$ to $[0,N)$ and $0\leq
\nu\leq d_{n}=-1+\displaystyle \sum_{j=1}^{s}\langle-
\frac{\alpha_{j}}{N}\rangle$. The fact that the above elements
constitute a basis can be seen in \cite{MZ}, proof of Lemma 5.1, where
the dual version for $H^1(C,\mathcal{O}_C)$ is proved. Note that in \cite{MZ}, Proposition 2.8, the 
formula for $d_n$ has been proven using a different method.\\

The following lemma is key to our later analysis and shows that we can use the dimension $d_n$ of the 
eigenspace $H^0(C,K_C)_n$ (computed in \cite{MZ}, Proposition 2.8) in our computations with $H^0(C,K_C)_{-,n}$. 
We remark that if $n=(n_1,\cdots, n_m)\in G\subset (\mathbb{Z}/N\mathbb{Z})^{m}$, we consider the $n_i\in [0,N)$ and their sum as
integers.

\begin{lemma} \label{dimeigspace}
The group $G$ acts on the spaces $H^0(C,K_C)_{-}$ and for $n\in G$, it holds that
$H^0(C,K_C)_{-,n}=H^0(C,K_C)_{n}$ if and only if $n_1+\cdots+n_m$ is odd and 
$H^0(C,K_C)_{-,n}=0$ otherwise. Similar statements hold for $H^1(C,\C)_{-,n}$.

\end{lemma}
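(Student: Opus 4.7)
\emph{Proof plan.} The strategy is to exploit that $\sigma$ lies in the abelian group $G$, so that the actions of $\sigma$ and of $G$ on $H^0(C, K_C)$ commute. As a first step, this commutation gives the initial assertion of the lemma, namely that $G$ preserves the $\pm1$-eigenspaces of $\sigma$. Equivalently, $\sigma$ preserves each $G$-character eigenspace $H^0(C,K_C)_n$, and since $\sigma^2 = e$, it acts on $H^0(C,K_C)_n$ by a scalar $\chi_n(\sigma) \in \{\pm 1\}$. Consequently
\[H^0(C,K_C)_{-,n} \;=\; H^0(C,K_C)_n \cap H^0(C,K_C)_- \;=\; \begin{cases} H^0(C,K_C)_n, & \chi_n(\sigma)=-1,\\ 0, & \chi_n(\sigma)=+1,\end{cases}\]
and both alternatives of the lemma reduce to determining when $\chi_n(\sigma) = -1$.

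For this I would compute $\chi_n(g)$ for $g \in G$ by applying $g$ to the explicit basis $\omega_{n,\nu}$ recalled just before the statement. The only non-$G$-invariant factor of $\omega_{n,\nu}$ is the monomial $w_1^{n_1}\cdots w_m^{n_m}$; under the standard convention from the construction of abelian covers, an element $g=(g_1,\ldots,g_m)\in G\subset(\Z/N\Z)^m$ acts by $w_i\mapsto\zeta_N^{g_i}w_i$ with $\zeta_N=\exp(2\pi i/N)$. A one-line substitution then gives $\chi_n(g)=\zeta_N^{n_1g_1+\cdots+n_mg_m}$. Specialising to $g=\sigma$, whose coordinates are, by the Prym-datum convention, all equal to $N/2$, yields $\chi_n(\sigma)=(-1)^{n_1+\cdots+n_m}$, which equals $-1$ exactly when $n_1+\cdots+n_m$ is odd.

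The ``similar statement'' for $H^1(C,\C)_{-,n}$ follows by running the same commuting-operators argument with $H^1(C,\C)$ in place of $H^0(C,K_C)$: each $G$-character piece of $H^1$ is $\sigma$-stable, and $\sigma$ acts there by the same scalar $\chi_n(\sigma)$. The Hodge decomposition $H^1=H^{1,0}\oplus\overline{H^{1,0}}$ gives an independent check, since complex conjugation interchanges the $n$- and $(-n)$-character components but commutes with $\sigma$, so the scalar $\chi_n(\sigma)$ computed on $H^{1,0}_n$ determines the $\sigma$-action on the full $n$-isotypic piece of $H^1$. The main obstacle, in my view, is the last part of the computation of $\chi_n(\sigma)$: verifying the precise form of $\sigma$ as an element of $G\subset(\Z/N\Z)^m$ under the paper's conventions, so that $\chi_n(\sigma)$ really collapses to $(-1)^{n_1+\cdots+n_m}$ rather than the more general root of unity $\zeta_N^{\sum_i n_i\sigma_i}$.
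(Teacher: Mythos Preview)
Your approach is essentially the same as the paper's: both compute the $\sigma$-action on the explicit basis $\omega_{n,\nu}$ via its effect on the factors $w_i$, with your commuting-operators preamble just making explicit why each $H^0(C,K_C)_n$ is $\sigma$-stable before reading off the scalar. Your flagged concern about the precise coordinates of $\sigma$ is well-placed---the paper is equally informal here, writing that $\sigma$ sends $w_i\mapsto -w_i$ ``for some subset of $\{1,\dots,m\}$'' yet still concluding with the parity of the \emph{full} sum $n_1+\cdots+n_m$, which tacitly assumes (as you do) that this subset is everything.
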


\begin{proof}
By the construction of an abelian cover of $\mathbb{P}^1$ in the beginning of this section, the action of $\sigma$ is given by $w_i\mapsto -w_i$ for some subset of $\{1,\cdots, m\}$
(and naturally $w_j\mapsto w_j$ for $j$ in the complement of this subset).  Using the basis of the 
space $H^0(C,K_C)$ given above, the $\sigma$-eigenspace $H^0(C,K_C)_+$ is the set of all
$\omega_{n,\nu}$ with $n_1+\cdots+n_m$ even and $H^0(C,K_C)_-$ is the set of all $\omega_{n,\nu}$ with $n_1+\cdots+n_m$ odd. If $n\in G$
(or the corresponding character in $\mu_G$, see Remark~\ref{abeliangroupcharacter}) the eigenspace $H^0(C,K_C)_{-,n}$ is then given by:

\[H^0(C,K_C)_{-,n}=\begin{cases}
        0, & \text{if } 2|n_1+\cdots+n_m,\\
        H^0(C,K_C)_{n} & \text{otherwise,} 
        \end{cases} \]

and in general, it holds that:
\[H^1(C,\C)_{-,n}=\begin{cases}
        0, & \text{if } 2|n_1+\cdots+n_m,\\
        H^1(C,\C)_{n} & \text{otherwise.} 
        \end{cases} \]        
\end{proof}
We show that if $s$ is large enough, then  there
exist $\mathbb{R}$-simple factors $Q_i$, such that
$\sum\delta(Q_{i})>s-3$ and hence the family is not Shimura.

\begin{remark} \label{eigenspacetype}
 Let $f:Y\rightarrow T$ be a family of
 abelian Galois covers of $\mathbb{P}^{1}$ as constructed in
 section $1$. Then the local system
 $\mathcal{L}=R^{1}f_{*}\mathbb{C}_-$ gives rise to a polarized
 variation of Hodge structures (PVHS) of weight $1$ whose fibers are the HS discussed above. Consider the
 associated monodromy representation $\pi_{1}(T,x)\to GL(V)$, where
 $V$ is the fiber of $\mathcal{L}$ at $x$. The Zariski closure of the
 image of this morphism is called the \emph{monodromy group} of $\mathcal{L}$.
 We denote the identity component of this group by $Mon^{0}(\mathcal{L})$. The
 PVHS decomposes according to the action of the abelian Galois
 group $G$ and the eigenspaces $\mathcal{L}_{i}$ (or
 $\mathcal{L}_{\chi}$ where $i\in G$ corresponds to character $\chi
 \in \mu_{G}$ by Remark~\ref{abeliangroupcharacter}) are again variations of Hodge
 structures and we are mainly interested in these. Take a $t\in T$
 and assume that $h^{1,0}((\mathcal{L}_{i})_{t})=a$ and
 $h^{0,1}((\mathcal{L}_{i})_{t})=b$.  The above computations show how to calculate $h^{1,0}((\mathcal{L}_{i})_{t})$
 (resp. $h^{0,1}((\mathcal{L}_{i})_{t})$). Since monodromy group respects
 the polarization of the Hodge structures (\cite{R}, 3.2.6),
 $(\mathcal{L}_{i})_{t}$ is equipped with a Hermitian form of
 signature $(a,b)$ (see \cite{DM}, 2.21 and 2.23). This implies that
 $Mon^{0}(\mathcal{L}_{i}) \subseteq U(a,b)$. In this case, we say
 that $\mathcal{L}_{i}$ is \emph{of type} $(a,b)$. Lemma~\ref{dimeigspace} (together with \cite{MZ}, Proposition 2.8) computes type of any eigenspace.
 Two eigenspaces $\mathcal{L}_{i}$ and $\mathcal{L}_{j}$ of types $(a,b)$ and $(a^{\prime},b^{\prime})$ are said to be \emph{of
 distinct types} if $\{a,b\}\neq \{a^{\prime},b^{\prime}\}$.  We call an eigenspace $\mathcal{L}_{i}$ \emph{trivial} if it is of type $(a,0)$ or $(0,b)$.
 \end{remark}
 
 \begin{remark} \label{monodromydecomp}
 Let $\mathcal{V}$ be a variation of Hodge structures over a non-singular connected complex algebraic variety. 
 If there is a point $s$, such that the Mumford-Tate group $MT_s$ is 
 abelian, then the connected monodromy group is a normal subgroup of the generic Mumford-Tate group $M$. In fact in this case $Mon^0=M^{der}$, see \cite{Andr}. In particular, 
If $Z\subset A_g$ is special, then $Mon^0=M^{der}$. Consequently, if the family $f\colon Y\to T$ gives rise to a Shimura subvariety
 and $M^{ad}_{\mathbb{R}}=\prod_{1}^{l} Q_{i}$ as a product of simple Lie groups then $Mon^{0,ad}_{\mathbb{R}}=\prod_{i\in K} Q_{i}$ for some $K\subset \{1,\cdots, l\}$.
\end{remark}
 
\section{Shimura families in the Prym locus}

In this section, we show that for large $s$, families of Pryms of abelian covers of the line do not
give rise to special subvarieties in $A_g$. We prove several results in this direction, which show that
the special families of abelian covers are very limited.

\subsection{Strategy of the proof of main theorems} \label{strategy}
The following argument will be used in all of the theorems below: 
In order to prove that the variety $Z$ is not special, or in other words that $\dim Z< \dim S_f$, we assume on the contrary that $Z$ is special. So $Mon^0$ will be a normal subgroup of the generic MT-group $M$
by Remark~\ref{monodromydecomp}. In particular by this assumption $Mon^0(\mathcal{L}_{i})$ also appear in $M^{ad}$. According to Remark~\ref{monodromydecomp} above, if $M^{ad}_{\mathbb{R}}=\prod_{1}^{l} Q_{i}$ is the decomposition
 into $\mathbb{R}$-simple factors, then $Mon^{0,ad}_{\mathbb{R}}=\prod_{i\in K} Q_{i}$ for some subset $K$ as well. We need to find eigenspaces
 $\mathcal{L}_{j_i}$ of distinct types $\{a_i,b_i\}$
 with $a_i$ and $b_i$ large enough. More precisely, if we can find eigenspaces $\mathcal{L}_{j_i}$ as above, then these eigenspaces, being of different types $\{a_i,b_i\}$, 
 give rise to non-isomorphic $Q_i=Mon^{0,ad}_{\mathbb{R}}(\mathcal{L}_{j_i})=PSU(a_i,b_i)$ for
 $i\in K$ in the above decomposition of $Mon^{0,ad}_{\mathbb{R}}$, see Remarks ~\ref{smallestshimura} and ~\ref{eigenspacetype}, and if for
 $\delta(\mathcal{L}_{j_i})=\delta(Mon^{0,ad}_{\mathbb{R}}(\mathcal{L}_{j_i}))$, we have that 
 $\sum \delta(\mathcal{L}_{j_i})>s-3$, then
 $\dim S_f\geq \sum \delta(\mathcal{L}_{j_i})>s-3$. This is what being \emph{large enough} means in the above, note that $\delta(\mathcal{L}_{j_i})$ 
 depends in our examples only on $a_i$ and $b_i$; see Remark~\ref{smallestshimura}.

Now we are ready to state our main results. In \cite{CFGP}, it is shown that for families of unramified Prym varieties $\tilde{G}$ is not cyclic. The following
result shows that in general, the cyclic families which can be special are of a very restrictive nature.

\begin{proposition}
Let $(\tilde{G},\tilde{\theta},\sigma)$ be a Prym datum of a family of abelian covers with $\tilde{G}$ cyclic.
Then either all of the eigenspaces $\mathcal{L}_a$ except one are trivial or all of them are of the same type 
(necessarily of type $(1,s-3)$) or the family is not special.
\end{proposition}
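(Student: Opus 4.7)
The plan is to apply the strategy of \S\ref{strategy}: assuming the family is special, I force either case (A) or case (B). First, since $\tilde G=\mathbb{Z}/N$ contains the involution $\sigma$, one has that $N$ is even and $\sigma=N/2$. By Lemma~\ref{dimeigspace} the nontrivial $\sigma$-minus eigenspaces are the $\mathcal{L}_a$ for odd $a\in\mathbb{Z}/N$, of type $(d_a,d_{-a})$, and a direct calculation from the formula for $d_a$ yields
\[
d_a+d_{-a}\;=\;-2+\#\{j:ar_j\not\equiv 0\pmod N\}.
\]
For primitive characters (those with $\gcd(a,N)=1$) and genuine branch points ($r_j\neq 0$ for every $j$), this gives $d_a+d_{-a}=s-2$.

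Second, the key dichotomy. Suppose there exist nontrivial $\mathcal{L}_{a_1},\mathcal{L}_{a_2}$ with primitive $a_i$ and distinct unordered types $\{p_1,q_1\}\neq\{p_2,q_2\}$. By \S\ref{strategy} and Remark~\ref{monodromydecomp} they produce non-isomorphic simple factors $PSU(p_i,q_i)$ of $M^{ad}_\mathbb{R}$, so
\[
\dim S_f\;\geq\;p_1q_1+p_2q_2\;\geq\;2(s-3)\;>\;s-3,
\]
using that $p_i+q_i=s-2$ and $p_i,q_i\geq 1$ force $p_iq_i\geq s-3$. By Lemma~\ref{dimS_f} this contradicts specialness. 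Hence in a special family all primitive nontrivial eigenspaces share a single common type $\{p,q\}$ with $p+q=s-2$.

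Third, I identify this common type. If $\{p,q\}\neq\{1,s-3\}$, then $p,q\geq 2$ and $pq\geq 2(s-4)>s-3$ for $s\geq 6$, so already one factor $PSU(p,q)$ gives $\dim S_f>s-3$; for $s\in\{4,5\}$ the equation $p+q=s-2$ admits only $\{1,s-3\}$. To extend case (B) to non-primitive eigenspaces, suppose some nontrivial non-primitive $\mathcal{L}_a$ has type $\{p',q'\}\neq\{1,s-3\}$ with $p',q'\geq 1$. Pairing with a primitive one of type $\{1,s-3\}$ yields two distinct types, whence
\[
\dim S_f\;\geq\;(s-3)+p'q'\;\geq\;s-2\;>\;s-3,
\]
again a contradiction. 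So every nontrivial eigenspace has type $\{1,s-3\}$, i.e.\ case (B).

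The main obstacle is the degenerate sub-case in which no primitive eigenspace is nontrivial, so the "primitive pivot of type $\{1,s-3\}$" is unavailable in the non-primitive step. The plan there is to exploit that complex conjugation $\mathcal{L}_a\leftrightarrow\mathcal{L}_{-a}$ preserves the unordered type: to have at most one nontrivial eigenspace (case (A)), one must have either none at all or exactly the self-conjugate $\mathcal{L}_{N/2}$ (which requires $N/2$ odd); in every other configuration, at least two nontrivial conjugate pairs of non-primitive odd characters occur, and the distinct-types argument of the second step is re-run on these pairs to rule out specialness. Adapting the numerical inequalities to the non-primitive case (where $p+q<s-2$ is possible) is the most delicate point of the proof.
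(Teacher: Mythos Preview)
Your approach is essentially the same as the paper's: assume the family is special and use two nontrivial eigenspaces of distinct types to force $\dim S_f>s-3$ via the strategy of \S\ref{strategy}. The paper's own proof is considerably terser; it does not separate primitive from non-primitive characters, and it does not actually verify the parenthetical claim that the common type must be $(1,s-3)$. In that respect your steps three through five go further than the paper does.

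The genuine gap in your proposal is exactly the ``main obstacle'' you flag yourself: the degenerate case in which every primitive eigenspace is trivial. Your sketch there (pairing off non-primitive conjugate pairs and re-running the inequality) does not close, because for non-primitive $a$ one may have $d_a+d_{-a}<s-2$, so the bound $p'q'\geq s-3$ is unavailable and you cannot conclude $\sum\delta(Q_i)>s-3$. The paper does not attempt to treat this case at all; instead it invokes \cite{M}, Lemma~7.2 (together with Lemma~\ref{dimeigspace}) to assert outright that some $n\in(\mathbb{Z}/N)^*$ with $\{d_n,d_{-n}\}\neq\{0,s-2\}$ always exists, i.e.\ there is always a nontrivial primitive eigenspace. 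With that external input your degenerate case simply cannot occur, and the remainder of your argument goes through cleanly. Without it, your proof is incomplete precisely where you say it is.
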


\begin{proof}
Let $\tilde{G}=\mathbb{Z}/N$. Using Lemma~\ref{dimeigspace} and \cite{M}, Lemma 7.2, there exists an $n\in (\mathbb{Z}/N)^*$ with 
$\{d_n,d_{-n}\}\neq \{0,s-2\}$. If all other eigenspaces are trivial the claim is proved. Otherwise, assume that there exists another eigenspace corresponding to $n^{\prime}$
such that $\mathcal{L}_n$ and $\mathcal{L}_{n^{\prime}}$ are of distinct types and $\mathcal{L}_{n^{\prime}}$ is not trivial. Since $d_n+d_{-n}=s-2$, it follows
that $d_nd_{-n}\geq s-3$ and by assumption, $d_{n^{\prime}}d_{-n^{\prime}}>0$. Then the argument in ~\ref{strategy} shows that $\dim S_f>s-3$ and hence the family is not special.

\end{proof}

\begin{proposition} \label{cyclicprym}
Let $(\tilde{G},\tilde{\theta},\sigma)$ be a Prym datum of a family of abelian covers with $\tilde{G}$ cyclic
corresponding to $(a_1,\cdots, a_s)$ with $\sum a_i>2N, \sum [-a_i]>2N$. If $s>5$, Then the family of Pryms is not special.
\end{proposition}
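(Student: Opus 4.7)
The plan is to apply the strategy of Subsection~\ref{strategy} to a single, carefully chosen character. Since $\tilde{G}$ is cyclic and contains an involution, we may take $\tilde{G}=\mathbb{Z}/N$ with $N$ even, $\sigma=N/2$, $m=1$, and the covering matrix reduces to the row vector $A=(a_1,\ldots,a_s)$. Consider the character $n=1\in\mathbb{Z}/N$. Since $n=1$ is odd, Lemma~\ref{dimeigspace} guarantees that the corresponding summand $\mathcal{L}_1$ of the $-$-variation $R^{1}f_{*}\mathbb{C}_{-}$ is non-zero, and its Hodge type $(h^{1,0},h^{0,1})$ equals $(d_1,d_{-1})$ as computed by the standard formula for $d_n$ of Section~2.

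The next step is a short calculation translating the two hypotheses on $(a_1,\ldots,a_s)$ into the Hodge numbers of $\mathcal{L}_1$. Writing $k:=(\sum_j a_j)/N$, which is an integer thanks to the condition that the cover be unramified at infinity, one gets directly from the formula for $d_n$ (applied to $\alpha_j=a_j$ and $\alpha_j=N-a_j$ respectively) the identities
\[
d_{-1}=k-1,\qquad d_1=s-1-k,\qquad d_1+d_{-1}=s-2.
\]
The hypothesis $\sum a_j>2N$ then forces $k\geq 3$, hence $d_{-1}\geq 2$, while $\sum[-a_j]=sN-\sum a_j>2N$ forces $k\leq s-3$, hence $d_1\geq 2$. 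In particular $\mathcal{L}_1$ is a non-trivial eigenspace of genuine Hermitian type.

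To conclude, I would argue by contradiction: suppose $Z$ is special. By Remark~\ref{monodromydecomp} the connected monodromy coincides with $M^{\mathrm{der}}$, and by the mechanism explained in Subsection~\ref{strategy} the non-trivial eigenspace $\mathcal{L}_1$ gives rise to an $\mathbb{R}$-simple factor $PSU(d_1,d_{-1})$ of $M^{\mathrm{ad}}_{\mathbb{R}}$, contributing $\delta(\mathcal{L}_1)=d_1d_{-1}$ to $\dim S_f$. The constraint $d_1+d_{-1}=s-2$ with both entries at least $2$ minimises the product at $2(s-4)$, and $2(s-4)>s-3$ precisely when $s>5$. Hence $\dim S_f\geq d_1d_{-1}>s-3$, in contradiction with $Z$ being special by Lemma~\ref{dimS_f}.

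The only delicate point is not a new argument but the input underlying Subsection~\ref{strategy}: that on a non-trivial eigenspace the connected monodromy saturates to the full unitary group of its Hermitian form, so that indeed $\delta(\mathcal{L}_1)=d_1d_{-1}$. Once this is granted (exactly as in the proof of the preceding proposition), the rest is just the divisibility arithmetic linking $k$ to the hypotheses on $\sum a_j$ and $\sum[-a_j]$, and the elementary optimisation giving $d_1d_{-1}\geq 2(s-4)$.
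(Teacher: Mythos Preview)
Your proof is correct and follows essentially the same approach as the paper's: both use the single character $n=1$, show via the hypotheses $\sum a_j>2N$ and $\sum[-a_j]>2N$ that the resulting eigenspace has type $(l,s-l-2)$ with $l,s-l-2\geq 2$, and then conclude $\dim S_f\geq l(s-l-2)\geq 2(s-4)>s-3$ for $s>5$. Your write-up is in fact more explicit than the paper's in deriving the bounds on the Hodge numbers from the numerical hypotheses via the integer $k=(\sum a_j)/N$.
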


\begin{proof}
Assume that $Z$ is special. We are going to wield the observation in \S ~\ref{strategy} as in the last proposition. Suppose that the single row is of type
$(l,s-l-2)$ so $\delta(\mathcal{L}_1)=(l,s-l-2)$ by the notation of Remark~\ref{smallestshimura}. 
Using Lemma~\ref{dimeigspace} and Remark~\ref{smallestshimura}, and the assumption implies that 
\[\dim S_f\geq l(s-l-2)\geq 2(s-4)>s-3,\]
which is a contradition. Hence the family is not special.
\end{proof}

The following theorem generalizes the above result to the case of arbitrary abelian covers.

\begin{theorem} \label{abeigprym}
Let $(\tilde{G},\tilde{\theta},\sigma)$ be a Prym datum of a family of abelian covers.
If there exist eigenspaces of distinct types $(a,b)$ and $(c,d)$ with $a,b,c,d\geq 2$ and the corresponding row vectors contain less than $s$ zero entries,  
then for $s>13$, the family of Pryms is not special. 
\end{theorem}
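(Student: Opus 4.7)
The plan is to carry out the general strategy of \S\ref{strategy}. Assume for contradiction that $Z$ is a special subvariety. By Remark~\ref{monodromydecomp} we then have $Mon^{0}=M^{\mathrm{der}}$, and the adjoint monodromy is a subproduct of the $\mathbb{R}$-simple factors $Q_i$ in the decomposition $M^{\mathrm{ad}}_{\mathbb{R}}=\prod_{i} Q_i$. By Remark~\ref{eigenspacetype}, the restriction of the connected monodromy to the eigenspace of type $(p,q)$ lies in $PSU(p,q)$ and contributes at most one simple factor of dimension $pq$ to this decomposition. Applied to the two hypothesized eigenspaces, whose types $(a,b)$ and $(c,d)$ are distinct as unordered pairs, this gives two non-isomorphic real Lie groups $PSU(a,b)$ and $PSU(c,d)$, hence two distinct factors $Q_i,Q_j$. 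By Remark~\ref{smallestshimura} we obtain
\[
\dim S_f \;\geq\; \delta(Q_i)+\delta(Q_j) \;=\; ab+cd.
\]

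Next I would bound $ab+cd$ from below in terms of $s$. By Lemma~\ref{dimeigspace} together with the dimension formula for $d_n$ recalled just before it, one has $a+b=d_n+d_{-n}=s-2-z_n$, where $z_n$ is the number of zero entries of the row vector $n\cdot A\in (\Z/N)^s$; similarly $c+d=s-2-z_m$. The hypothesis that each row vector contains fewer than $s$ zeros, combined with the constraints $a,b,c,d\geq 2$, controls $z_n+z_m$ and hence produces a lower bound on $a+b+c+d$. Combining with the elementary inequality $pq\geq 2(p+q)-4$ (valid for $p,q\geq 2$ since $(p-2)(q-2)\geq 0$, with equality iff $\min(p,q)=2$), one gets
\[
ab+cd \;\geq\; 2(a+b+c+d)-8,
\]
and a direct calculation shows that this quantity strictly exceeds $s-3$ once $s>13$. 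By Lemma~\ref{dimS_f} this contradicts the assumption that $Z$ is special.

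The main obstacle is the last numerical step. The estimate $pq\geq 2(p+q)-4$ is saturated precisely when $\min(p,q)=2$, so the threshold $s>13$ arises from the extremal configuration in which both $(a,b)$ and $(c,d)$ have their smaller entry equal to $2$ and both row vectors carry the maximum allowed number of zero entries. Pinning down this case requires a careful simultaneous use of $a,b,c,d\geq 2$ (which already forces $z_n,z_m\leq s-6$) together with the zero-entry hypothesis, in order to push $ab+cd$ strictly past $s-3$ exactly in the stated range of $s$.
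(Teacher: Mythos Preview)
Your approach is exactly the paper's: reduce to $\dim S_f\geq ab+cd$, use $pq\geq 2(p+q)-4$ for $p,q\geq 2$, and combine with $a+b=s-2-z_n$, $c+d=s-2-z_m$ to get $ab+cd\geq 4s-2(z_n+z_m)-16$. This is literally the paper's inequality $l_1(s-l-l_1-2)+k_1(s-k-k_1-2)\geq 2(s-l-4)+2(s-k-4)$ with $l=z_n$, $k=z_m$.

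The only real gap is your reading of the zero-entry hypothesis. You interpret it as ``each row vector contains fewer than $s$ zeros,'' which is essentially vacuous (a nonzero vector in $(\Z/N)^s$ automatically has at most $s-1$ zeros) and, as you correctly sense in your final paragraph, cannot close the argument: from $a,b,c,d\geq 2$ alone you only get $z_n,z_m\leq s-6$, hence $ab+cd\geq 8$, which is useless. The intended hypothesis, and what the paper uses, is that the two row vectors \emph{together} contain fewer than $s$ zero entries, i.e.\ $z_n+z_m<s$. With that, your bound gives
\[
ab+cd\;\geq\;4s-2(z_n+z_m)-16\;>\;4s-2s-16\;=\;2s-16\;>\;s-3
\]
for $s>13$, and Lemma~\ref{dimS_f} finishes the proof. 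So once you correct the reading of the hypothesis, there is no remaining obstacle and your ``extremal configuration'' discussion is unnecessary.
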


\begin{proof}
Proceeding as in \S~\ref{strategy}, our strategy is to assume that $Z$ is special and show that this leads to a contradiction.
We first remark that if the corresponding row vectors contain respectively $l$ and $k$ zeros, then $0\leq l+k\leq 2s-2$.
Now suppose that the eigenspaces are of types $(l_1,s-l-l_1-2)$ and $(k_1,s-k-k_1-2)$. Note that $l>0$ (resp. $k>0$) if and only if in the row vector
corresponding to the eigenspaces there are some zero entires. By the same arguments as in Proposition~\ref{cyclicprym}, the assumptions of the theorem then imply that
\[\dim S_f\geq l_1(s-l-l_1-2)+k_1(s-l-k_1-2)\geq 2(s-l-4)+2(s-k-4)=4s-2(l+k)-16.\]
However, since $l+k<s$, it follows that for $s>13$, $\dim S_f>s-3$, which contradicts our assumption. Hence the claim is proved.
\end{proof}

The above theorem is particularly interesting if our abelian covers have 2 rows, i.e. $m=2$ or the abelian group has the form $\tilde{G}=\mathbb{Z}/N\times \mathbb{Z}/M$.

\begin{corollary}
Let $(\tilde{G},\tilde{\theta},\sigma)$ be a Prym datum of a family of abelian covers.
If there exist eigenspaces of distinct types $(a,b)$ and $(c,d)$ with $a,b,c,d\geq 2$. Then for $s>13$, the family of Pryms is not special. 
\end{corollary}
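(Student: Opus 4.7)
The plan is to deduce this as a direct specialization of Theorem~\ref{abeigprym}. Following the strategy of \S\ref{strategy}, I assume for contradiction that $Z$ is special, let $n_1,n_2\in\tilde G$ be the characters whose eigenspaces $\mathcal L_{n_1},\mathcal L_{n_2}$ realise the distinct types $(a,b)$ and $(c,d)$, and let $l,k$ denote the numbers of zero entries in the row vectors $n_1\cdot A$ and $n_2\cdot A$ respectively. Lemma~\ref{dimeigspace} combined with the dimension formula $d_n=-1+\sum_j\langle -\alpha_j/N\rangle$ identifies the type of $\mathcal L_{n_i}$ with $(d_{n_i},d_{-n_i})$, and yields $d_{n_i}+d_{-n_i}=s-z_i-2$, where $z_i$ is the number of zero entries in $n_i\cdot A$. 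The hypothesis $a,b,c,d\geq 2$ therefore forces $l,k\leq s-6$.

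The decisive additional input, and the main obstacle, is the bound $l+k<s$ on the combined number of zero entries; without it the crude estimate $l,k\le s-6$ only delivers $l+k\le 2s-12$, which is too weak. The idea I would use is that a column $A_j$ counted in both $l$ and $k$ lies in the subgroup $\ker(n_1)\cap\ker(n_2)\subsetneq\tilde G$, and since the columns of $A$ generate $\tilde G$, not every column can lie in this proper subgroup. A short inclusion--exclusion on the supports of $n_1\cdot A$ and $n_2\cdot A$, together with the fact that each row vector is nonzero (so has at least one non-vanishing entry), then upgrades this to $l+k<s$. In the case $m=2$ with $\tilde G=\mathbb Z/N\times\mathbb Z/M$ highlighted in the paragraph preceding the corollary, this is especially transparent: it reduces to a direct inspection of the annihilators of two linearly independent characters in $(\mathbb Z/N)\oplus(\mathbb Z/M)$.

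Once $l+k<s$ is established, the chain of inequalities from the proof of Theorem~\ref{abeigprym} carries over verbatim:
\[\dim S_f \;\geq\; 2(s-l-4)+2(s-k-4) \;=\; 4s-2(l+k)-16 \;>\; 2s-16 \;>\; s-3\]
whenever $s>13$, contradicting $\dim Z\leq s-3$. Hence $Z$ cannot be special. The only nontrivial piece is thus the combinatorial bound $l+k<s$; everything else is a mechanical re-application of Theorem~\ref{abeigprym}.
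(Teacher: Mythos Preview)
Your overall plan---verify the missing hypothesis $l+k<s$ of Theorem~\ref{abeigprym} and then quote that theorem---is exactly the paper's route. The paper, however, only argues the case $m=2$ (two rows), asserting that there $l+k<s$ is ``obvious'' and then invoking Theorem~\ref{abeigprym}; it does not attempt the general $m$.

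The problem is with your justification of $l+k<s$. Write $Z_i=\{j:\,(n_i\cdot A)_j=0\}$, so $l=|Z_1|$, $k=|Z_2|$. Your two observations are (i) not every column lies in $\ker(n_1)\cap\ker(n_2)$, which gives $|Z_1\cap Z_2|\le s-1$, and (ii) each row vector $n_i\cdot A$ is nonzero, which gives $|Z_i|\le s-1$. Inclusion--exclusion then yields
\[
l+k \;=\; |Z_1\cup Z_2|+|Z_1\cap Z_2|\;\le\; s+(s-1)\;=\;2s-1,
\]
or directly $l+k\le 2(s-1)$; neither is anywhere near $l+k<s$. So the ``short inclusion--exclusion'' you allude to does not upgrade the trivial bound to the one you need. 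Concretely, nothing you have written excludes a configuration in which the columns split into two blocks, one annihilated by $n_1$ and the other by $n_2$, giving $l+k=s$ (or more if the blocks overlap).

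For the $m=2$ case you still owe an argument: ``direct inspection of the annihilators of two linearly independent characters'' is not yet a proof, and in $\Z/N\oplus\Z/M$ the characters $n_1,n_2$ need not be independent in any useful sense. If you want to match the paper, restrict to $m=2$ and actually exhibit a column $A_j$ with both $(n_1\cdot A)_j\neq 0$ and $(n_2\cdot A)_j\neq 0$ (equivalently $Z_1\cup Z_2\neq\{1,\dots,s\}$) together with $Z_1\cap Z_2=\varnothing$; that is what forces $l+k=|Z_1\cup Z_2|<s$. As written, this step is a genuine gap.
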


\begin{proof} 
If the family has only two rows, then obviuosly, $l+k<s$ and we can apply the above Theorem~\ref{abeigprym}.

\end{proof}

\end{document}